\newtheorem{theorem}{Theorem}[section]
\newtheorem{proposition}[theorem]{Proposition}
\newtheorem{question}[theorem]{Question}
\newtheorem{conjecture}[theorem]{Conjecture}
\theoremstyle{plain}
\theoremstyle{remark}
\newtheorem{example}[theorem]{Example}
\newcommand{\Q}{{\mathbb Q}}
\newcommand{\Z}{{\mathbb Z}}
\newcommand{\N}{{\mathbb N}}
\newcommand{\cE}{{\mathcal E}}
\newcommand{\tensor}{\otimes}
\newcommand{\Gm}{\mathbb{G}_{\text{m}}}
\newcommand{\Qbar}{\bar{\Q}}
\DeclareMathOperator{\Tr}{Tr}
\DeclareMathOperator{\tor}{tor}
\newcommand{\bP}{{\mathbb P}}
\newcommand{\cS}{\mathcal{S}}
\newcommand{\hhat}{{\widehat h}}
\DeclareMathOperator{\chara}{char}
\author{Khoa Dang Nguyen}
\address{
Khoa D.~Nguyen \\
Department of Mathematics and Statistics\\
University of Calgary\\
AB T2N 1N4, Canada
}
\email{dangkhoa.nguyen@ucalgary.ca}
\keywords{Hilbert-Joubert problem, Brassil-Reichstein conjecture, diophantine equations}
\subjclass[2010]{Primary: 11D72. Secondary: 11G05.}
\begin{document}
	\title[The Hermite-Joubert problem]{The Hermite-Joubert problem and a conjecture of Brassil-Reichstein}
	\date{July 2017}
	\begin{abstract}		 
		We show that Hermite theorem fails for every
		integer $n$ of the form $3^{k_1}+3^{k_2}+3^{k_3}$
		with integers $k_1>k_2>k_3\geq 0$. This confirms
		a conjecture of Brassil and Reichstein. We also
		obtain new results for the relative
		Hermite-Joubert problem over a finitely generated 
		field of characteristic $0$.
	\end{abstract}
	
	\maketitle
	
	\section{Introduction} \label{sec:intro}
	The Hermite-Joubert problem in characteristic $0$ is as follows:
	\begin{question}\label{q:Hermite-Joubert}
	Let $n\geq 5$ be an integer. Let $E/F$ be a field extension with $\chara(F)=0$ and $[E:F]=n$, can one
	always find an element $0\neq \delta\in E$ such
	that $\Tr_{E/F}(\delta)=\Tr_{E/F}(\delta^3)=0$?
	\end{question}
	The answer is ``yes'' when $n=5$ and $n=6$ thanks
	to results by Hermite \cite{Her61_Sl} and 
	Joubert \cite{Jou67_Sl} in the 1860s. 
	Modern proofs of these results can be found in
	\cite{Cor87_CH,Kra06_AR}. 
	When
	$n$ has the form $3^k$ for an integer $k\geq 0$ or the form
	$3^{k_1}+3^{k_2}$
	for integers $k_1>k_2\geq 0$, 
	Reichstein \cite{Rei99_OA}
	shows that Question~\ref{q:Hermite-Joubert} has
	the negative answer. The readers are referred to
	\cite{BR97_OT,Rei99_OA,RY02_ED} for further developments 
	and 
	open questions inspired by the Hermite-Joubert 
	problem.
	\emph{This paper is motivated by 
	results and questions in a recent paper by
	Brassil and Reichstein \cite{BR17_TH}
	in which the case $n=3^{k_1}+3^{k_2}+3^{k_3}$
	for integers $k_1>k_2>k_3\geq 0$
	is studied.} Our first main result is the following:
	\begin{theorem}\label{thm:main 1}
	When $n=3^{k_1}+3^{k_2}+3^{k_3}$ for
	integers $k_1>k_2>k_3\geq 0$, Question~\ref{q:Hermite-Joubert} has the negative answer.
	\end{theorem}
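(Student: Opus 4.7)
The plan is to exhibit an explicit degree $n$ extension $E/F$ with $n=3^{k_1}+3^{k_2}+3^{k_3}$ for which no nonzero $\delta\in E$ satisfies $\Tr_{E/F}(\delta)=\Tr_{E/F}(\delta^3)=0$, following the $3$-adic strategy used by Reichstein in \cite{Rei99_OA} for $n=3^k$ and $n=3^{k_1}+3^{k_2}$, and further developed in \cite{BR17_TH}.

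First I would choose a base field $F$ of characteristic $0$ equipped with a discrete valuation $v$ of residue characteristic $3$; concretely one can take $F$ to be a finite extension of $\Q_3$, or a suitable number field localized at a prime above $3$. Then I would construct a degree $n$ extension $E/F$ whose completion at $v$ decomposes as an \'etale algebra
\[
E\otimes_F F_v \;\cong\; E^{(1)}\times E^{(2)}\times E^{(3)},
\]
where each $E^{(i)}/F_v$ is a totally ramified extension of degree $3^{k_i}$. The identity $3^{k_1}+3^{k_2}+3^{k_3}=n$ makes such a local decomposition possible; realizing it globally amounts to a weak approximation / inverse Galois style construction for the symmetric group $S_n$ acting on cosets, and one can even arrange that $E/F$ be generic in the sense used in \cite{BR17_TH}.

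Next, assume for contradiction that $\delta\in E$ is nonzero and satisfies both trace conditions. Passing to the completion, write $\delta=(\delta_1,\delta_2,\delta_3)$ with $\delta_i\in E^{(i)}$, and set $a_i=v(\delta_i)$, where $v$ is extended to each $E^{(i)}$. In a totally ramified extension of degree $3^{k_i}$ over a $3$-adic field, the valuation of $\Tr(\delta_i)$ is controlled by $a_i$, $k_i$, and the different exponent; a parallel statement holds for $\Tr(\delta_i^3)$, whose valuation behaves in terms of $3a_i$ and $k_i$. The two global conditions
\[
\sum_{i=1}^{3}\Tr_{E^{(i)}/F_v}(\delta_i)=0 \quad\text{and}\quad \sum_{i=1}^{3}\Tr_{E^{(i)}/F_v}(\delta_i^3)=0
\]
then translate into two simultaneous $3$-adic cancellation constraints on the triple $(a_1,a_2,a_3)$ together with information on the leading digits of the $\delta_i$.

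The main obstacle is showing that these two constraints together admit no nontrivial solution. In the two-term analogue treated in \cite{Rei99_OA}, the discrepancy between $k_1$ and $k_2$ already rules out cancellation in the single equation $\Tr(\delta)=0$. In our three-term case, cancellation between any two of the three summands is a priori possible in the first equation, so the cubed equation must be used to produce an additional obstruction. I expect the key will be a combinatorial lemma comparing base-$3$ expansions of $a_i$ versus $3a_i$, combined with an induction on $k_1-k_3$ that reduces either to the two-summand case of \cite{Rei99_OA} or to a degenerate configuration where two of the $a_i$ coincide and can be collapsed into a single totally ramified factor of degree $2\cdot 3^{k_i}$. Executing this $3$-adic digit analysis carefully, and verifying that the constructed $E/F$ has the required local splitting behavior, is where the bulk of the work lies.
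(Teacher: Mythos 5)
Your proposal outlines exactly the framework that was already built by Reichstein and by Brassil--Reichstein: pass to the general extension $E_n/F_n$, localize at a prime above $3$, decompose into totally ramified pieces of degrees $3^{k_1}, 3^{k_2}, 3^{k_3}$, and translate $\Tr(\delta)=\Tr(\delta^3)=0$ into $3$-adic constraints on leading terms. But the step you flag as ``the main obstacle'' is precisely what was open, and your expectation of a ``combinatorial lemma comparing base-$3$ expansions of $a_i$ versus $3a_i$'' together with an induction on $k_1-k_3$ is not how the obstruction actually resolves. Brassil and Reichstein showed that the local analysis does not close off on its own: it reduces the problem to the genuine Diophantine system $Z_1^3+Z_2^3+9Z_3^3=0$, $3^aZ_1+3^bZ_2+Z_3=0$ in $[Z_1:Z_2:Z_3]\in\bP^2(\Q)$ with $a>b\geq 0$, which they left as Conjecture~14.3. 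There is no purely combinatorial digit argument that kills this; the two-summand case of Reichstein works because one equation already forces a valuation mismatch, but with three summands cancellation survives the naive analysis and lands you squarely on a plane cubic.

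The paper's contribution is to solve that Diophantine system, and the tools required are not elementary: an effective lower bound of Bennett for $|z_2^3+9z_3^3|$ (an irrationality-measure result for $\sqrt[3]{9}$), Selmer's determination that the Mordell--Weil group of $y^2=x^3-48$ is infinite cyclic generated by $(4,4)$, Silverman's explicit comparison between the N\'eron--Tate height and the naive height, and a $3$-adic analysis of $x([n]G)$ showing $3^b\mid N$, which together bound $b<3$ and $N$ to a short list that is then checked by hand. None of this is an ``induction reducing to the two-summand case,'' and none of it is a degit-counting lemma. In short: your reduction to the local picture is correct and is essentially the content of the earlier papers, but the heart of the theorem --- closing the gap you explicitly defer --- is a nontrivial effective Diophantine argument on a rank-one elliptic curve, and your proposal contains no substitute for it.
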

	 
	In fact, we will prove a more precise result (see
	Theorem~\ref{thm:precise 1}) answering a conjecture
	of Brassil-Reichstein \cite[Conjecture~14.1]{BR17_TH}.
	As in \cite{BR17_TH}, we can also consider 
	the relative version of Question~\ref{q:Hermite-Joubert} in which $F$ contains a given
	base field $F_0$; in particular, Question~\ref{q:Hermite-Joubert} corresponds to the case $F_0=\Q$.
	Our second result is the following (see 
	Theorem~\ref{thm:precise 2} for a more precise result):
	
	\begin{theorem}\label{thm:main 2}
	Let $F_0$ be a finitely generated field of characteristic $0$. There is a finite subset
	$\cS$ of $\N\times\N$ depending on $F_0$
	such that the following holds.
	For every integer $n$ of the form
	$3^{k_1}+3^{k_2}+3^{k_3}$ for
	integers $k_1>k_2>k_3\geq 0$ with $(k_1-k_3,k_2-k_3)\notin \cS$,
	 Question~\ref{q:Hermite-Joubert} relative to the base field $F_0$ has
	the negative answer.
	\end{theorem}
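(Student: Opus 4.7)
The plan is to reduce Theorem~\ref{thm:main 2} to (the precise form of) Theorem~\ref{thm:main 1} by a spreading out argument. The intuition is that a finitely generated field of characteristic $0$ always carries a $3$-adic valuation coming from an integral model, so one can transplant the $3$-adic obstruction that (presumably) underlies the proof of Theorem~\ref{thm:main 1}; the only price to pay is a bounded error depending on $F_0$, which is absorbed by taking the exponent gaps $k_1 - k_3, k_2 - k_3$ large enough.

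Concretely, I would write $F_0 = \Frac(R)$ with $R$ a finitely generated $\Z$-algebra, invert finitely many primes so that $R$ is sufficiently nice, and pick a prime $\mathfrak{p} \subset R$ above the rational prime $3$ with residue field $R/\mathfrak{p}$ a finitely generated extension of $\F_3$. This yields a discrete valuation $v$ on $F_0$ that plays the role of the $3$-adic valuation on $\Q$. I would then mimic the bad extension $E/F$ used in the proof of Theorem~\ref{thm:main 1}, now taking $F$ to be a suitable extension of $F_0$ (for example, a purely transcendental one over $F_0$). The $v$-adic analogue of the Brassil--Reichstein $3$-adic estimate should yield a lower bound, growing linearly in $\min(k_1 - k_3, k_2 - k_3)$, on the $v$-adic valuation of a suitable invariant attached to any hypothetical Hermite--Joubert element $\delta$. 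On the other hand, the arithmetic of $F_0$ contributes only a bounded additive error $c = c(F_0)$ depending on $F_0$ alone (via integrality of $\delta$ over $R$ after clearing denominators and via the residue field degree at $\mathfrak{p}$). Whenever the $v$-adic lower bound exceeds $c$, no $\delta$ can exist; this is the case for all but finitely many pairs $(k_1 - k_3, k_2 - k_3)$, which constitute the exceptional set $\cS$.

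The main obstacle I expect is making the error $c(F_0)$ genuinely uniform over all candidate $\delta$, not merely over generic ones. This requires an effective integrality argument: one must clear denominators of $\delta$ in a way governed by $R$ and $\mathfrak{p}$ rather than by $\delta$ itself, and bound any residual $v$-adic contributions in terms of the geometry of $\Spec R$ near $\mathfrak{p}$. A related subtlety is that when the residue field $R/\mathfrak{p}$ is a proper extension of $\F_3$, the residual polynomial identity driving Theorem~\ref{thm:main 1} must still obstruct a solution; I would handle this either by choosing $\mathfrak{p}$ so that $R/\mathfrak{p}$ is as small as possible, or by base-changing and tracking the normalised valuation carefully so that the gain per unit of $k_i - k_j$ still dominates a constant depending only on $F_0$.
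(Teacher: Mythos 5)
Your proposal takes a fundamentally different route from the paper, and unfortunately it has a genuine gap that makes the approach unworkable as stated.

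The paper's proof of Theorem~\ref{thm:main 2} goes through Theorem~\ref{thm:precise 2}: after invoking \cite[Theorem~1.4 and Remark~11.3]{BR17_TH}, the problem reduces to showing that a two-equation Diophantine system (a cubic curve together with a line whose coefficients are powers of $3$) has only finitely many solutions with $[Z_1:Z_2:Z_3]\in\bP^2(F_0)$. Splitting into residue classes mod $3$ reduces the exponents to powers of $9$, and then Proposition~\ref{prop:finite} applies the Mordell--Lang conjecture for the semi-abelian variety $\Gm\times\Gm\times\cE$ (Faltings, McQuillan, Vojta), with the finite-rank subgroup $\langle 9\rangle\times\langle 9\rangle\times\cE(F_0)$. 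The exceptional set $\cS$ appears precisely because Mordell--Lang is \emph{ineffective}; that is the source of the finite but unknown $\cS$ in the statement.

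Your proposal instead tries to transplant a ``$3$-adic obstruction'' from the $\Q$-case to $F_0$ via a spreading-out argument and a valuation $v$ extending the $3$-adic one. The first problem is that the proof of Theorem~\ref{thm:main 1} (via Theorem~\ref{thm:new system}) is not a purely $3$-adic argument at all: it combines a $3$-adic divisibility $\tilde z_1 \mid 3^{3b+2}-1$, \emph{archimedean} size estimates (Bennett's effective irrationality result for $|\tilde z_2^3+9\tilde z_3^3|$ and Silverman's bound comparing $\hhat$ with the Weil height), and, crucially, the fact that $\cE(\Q)$ is cyclic of rank $1$ generated by $(4,4)$. None of these ingredients transports to a general finitely generated $F_0$ through a choice of prime $\mathfrak p$ above $3$: over $F_0$ the curve $\cE$ can have arbitrarily large rank, $\bP^2(F_0)$-points of the cubic are not controlled by the index of a single generator, and Bennett's estimate is a statement about rational (or at best algebraic number) approximations. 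The second, more structural, problem is that your mechanism is not internally coherent: you derive a \emph{lower} bound on a $v$-adic invariant growing linearly in $\min(k_1-k_3,k_2-k_3)$ and say this contradicts a constant $c(F_0)$, but you never establish an \emph{upper} bound on that invariant that is independent of the exponents; without a counterpart upper bound, the growth of the lower bound proves nothing. In the $\Q$-case the matching upper bound comes from the rank-$1$ structure and explicit heights, which are exactly the pieces that do not survive the passage to general $F_0$. Finally, the plan to ``reduce Theorem~\ref{thm:main 2} to Theorem~\ref{thm:main 1}'' is not a reduction at all: Theorem~\ref{thm:main 1} is the special case $F_0=\Q$, and a specialization/valuation map sends $F_0$-data into residue characteristic $3$, not back into the situation of Theorem~\ref{thm:main 1}.

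In short, the correct and paper-intended route is the geometric finiteness theorem (Mordell--Lang) applied to the associated semi-abelian variety, which both naturally handles arbitrary finitely generated $F_0$ and explains the presence of the inexplicit exceptional set $\cS$; a valuation-theoretic spreading-out argument cannot reproduce the archimedean and rank-$1$ ingredients on which the $\Q$-case rests.
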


	{\bf Acknowledgments.}  We wish to thank Professor Zinovy Reichstein for communicating to us his conjecture with Brassil and for many useful discussions.
	
	\section{Proof of Theorem~\ref{thm:main 2}}\label{proof:thm 2}
	Throughout this section, $F_0$ is a finitely
	generated field of characteristic $0$.
	An abelian group $G$ is said to be of finite rank if $\Q\tensor_{\Z}G\cong G/G_{\tor}$ is a finite dimensional vector space over $\Q$. 
	We start with the following result which might be of 
	independent interest:
	\begin{proposition}\label{prop:finite}
		Let $P(Z_1,Z_2,Z_3)\in F_0[Z_1,Z_2,Z_3]$ be a homogeneous polynomial defining a geometrically irreducible plane curve with geometric genus  $g\geq 1$. Let $G$ be a finite rank subgroup of $\overline{F_0}^*$. Then the system of equations:
		\begin{align}
     P(Z_1,Z_2,Z_3)&=0\label{eq:system P eq1}\\
     xZ_1+yZ_2+Z_3&=0\label{eq:system P eq2}
    \end{align}
	has only finitely many solutions
	$(x,y,[Z_1:Z_2:Z_3])$ with
	$x,y\in G$, $[Z_1:Z_2:Z_3]\in \bP^2(F_0)$, and
	$Z_1Z_2Z_3\neq 0$.	
	\end{proposition}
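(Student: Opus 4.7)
The plan is to recast the problem as the intersection of a surface in a semiabelian variety with a subgroup of finite rank, and then to invoke the Mordell-Lang theorem. First, I dehomogenize by setting $U = Z_1/Z_3$ and $V = Z_2/Z_3$: the system becomes $P(U,V,1) = 0$ together with $xU + yV + 1 = 0$, and the condition $Z_1Z_2Z_3 \neq 0$ translates to $U, V \in \overline{F_0}^*$. Let $\tilde{C}$ denote the normalization of the projective plane curve cut out by $P$, a smooth, projective, geometrically irreducible curve over $F_0$ of genus $g \geq 1$ on which $U, V$ become nonconstant rational functions. After possibly replacing $F_0$ by a finite extension (which preserves both the finite generation of $F_0$ and the finite rank of $G$), I fix an $F_0$-rational base point of $\tilde{C}$ and let $\iota \colon \tilde{C} \hookrightarrow J := \Jac(\tilde{C})$ be the Abel-Jacobi embedding.

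Next, I form the semiabelian variety $A = \Gm^2 \times J$ over $F_0$ and consider
\[
W = \{(x, y, P) \in \Gm^2 \times \tilde{C} : xU(P) + yV(P) + 1 = 0\},
\]
viewed inside $A$ via $(x, y, P) \mapsto (x, y, \iota(P))$. Because the defining equation is linear in $(x, y)$ over $\overline{F_0}(\tilde{C})$, $W$ is geometrically irreducible of dimension $2$. Let $W^\circ \subset W$ denote the open subset where $P$ avoids the zeros and poles of $U$ and of $V$. Set $\Gamma := G \times G \times J(F_0) \subset A(\overline{F_0})$; by the Mordell-Weil theorem for finitely generated fields of characteristic zero (Lang-N\'eron), $J(F_0)$ is finitely generated, so $\Gamma$ is a subgroup of finite rank. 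Modulo a finite set of solutions at which the third component is a singular point of $C$, the solutions we wish to count are in bijection with the points of $W^\circ(\overline{F_0}) \cap \Gamma$.

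Now I apply the Mordell-Lang theorem for semiabelian varieties in characteristic zero (Faltings, Vojta, and Hindry for abelian varieties; McQuillan for the semiabelian extension): $W(\overline{F_0}) \cap \Gamma$ is a finite union of translates $g_i + H_i$, each contained in $W$, where $H_i$ is an algebraic subgroup of $A$. It suffices to verify that every positive-dimensional such translate is contained in $W \setminus W^\circ$. The positive-dimensional connected algebraic subgroups of $A$ have the form $T \times B$ with $T \subset \Gm^2$ a subtorus and $B \subset J$ an abelian subvariety, and a case analysis rules out every possibility: a two-dimensional translate would force $W$ itself to be that translate, contradicting the fact that the fibers of $W$ over $\tilde{C}$ are lines whose equations genuinely vary with $P$; a one-dimensional translate $(x_0 \cdot T) \times \{P_0\}$ with $T$ a nontrivial subtorus of $\Gm^2$ forces, after parameterizing $T$ by a coordinate $s$ and comparing powers of $s$ in $xU(P_0) + yV(P_0) + 1 = 0$, either $U(P_0) = 0$ or $V(P_0) = 0$, placing $P_0$ in the bad locus; and a one-dimensional translate $\{(x_0, y_0)\} \times (P_0 + B)$ with $B \subset J$ a $1$-dimensional abelian subvariety can lie in $W$ only if the coset $P_0 + B$ is contained in $\iota(\tilde{C})$, which is impossible when $g \geq 2$ (since $\iota(\tilde{C})$ is an irreducible curve of genus $\geq 2$, hence not a translate of an elliptic subvariety) and, when $g = 1$, would force $x_0 U + y_0 V + 1 \equiv 0$ on all of $\tilde{C}$, contradicting the linear independence of $U, V, 1$ in $\overline{F_0}(\tilde{C})$ (which holds since $\deg P \geq 3$). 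The main obstacle is this last step in the case $g = 1$: since $\iota$ is then an isomorphism, the ``vertical'' translate $\{(x_0, y_0)\} \times \tilde{C}$ cannot be excluded on dimensional grounds alone, and one must appeal to the non-degeneracy of the plane embedding.
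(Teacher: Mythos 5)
Your proof is correct, and it takes a genuinely different route from the paper's. The paper splits into two cases: for $g \geq 2$ it uses Faltings' theorem directly to get finitely many rational points on the curve and then finishes with the finiteness of solutions $(x,y) \in G\times G$ to the two-variable unit-type equation $xz_1+yz_2+z_3=0$; for $g=1$ it applies Mordell–Lang to the three-dimensional semiabelian variety $\Gm^2\times\cE$. You instead treat all $g\geq 1$ uniformly by passing to the Jacobian $J=\Jac(\tilde C)$, viewing the problem inside $\Gm^2\times J$, and applying Mordell–Lang once; the two genera are only distinguished at the very last step when ruling out "vertical" translates $\{(x_0,y_0)\}\times(P_0+B)$. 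This buys conceptual uniformity and avoids any separate appeal to the unit equation, at the modest cost of a slightly longer case analysis: you must classify all positive-dimensional connected algebraic subgroups of $\Gm^2\times J$ (they do split as $T\times B$ because $\Hom(B,\Gm)=0$, which your argument implicitly uses), and handle the two-dimensional cosets and the one-dimensional subtorus cosets by direct inspection, where the paper has fewer subgroups to check since $\dim\cE=1$. Both approaches rest on the same deep input (Faltings/Vojta/McQuillan) and on Lang–Néron for the finite-rank hypothesis, and both reduce the nontrivial degenerate cosets to the same two observations: a nontrivial one-parameter subgroup in $\Gm^2$ cannot lie in the affine line $xU(P_0)+yV(P_0)+1=0$ unless $U(P_0)V(P_0)=0$, and the full curve cannot lie in a line because $\deg P\geq 3$.
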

	\begin{proof}
	If $g\geq 2$ then by Faltings' theorem \cite{Fal91_DA,Fal94_TG} (also see \cite[Chapter~6]{Lan83_FO}), 
	there are only finitely many
	$[z_1:z_2:z_3]\in\bP^2(F_0)$ such that
	$P(z_1,z_2,z_3)=0$. For such a $[z_1:z_2:z_3]$ with
	$z_1z_2z_3\neq 0$, the equation $xz_1+yz_2+z_3=0$
	has only finitely many solutions 
	$(x,y)\in G\times G$
	(see, for instance, \cite[Chapter~5]{BG06_HI}).
	
	Now assume $g=1$. Let $\cE$ denote the elliptic curve 
	defined by $P(Z_1,Z_2,Z_3)=0$ after choosing a point
	$O_{\cE}\in \cE(F_0)$ as the identity; we may assume
	$\cE(F_0)\neq \emptyset$ since the proposition is vacuously true otherwise. Let
	$\Gamma:=G\times G\times \cE(F_0)$
	which is a finite rank subgroup of the semi-abelian variety $S:=\Gm\times \Gm\times \cE$ \cite[Chapter~6]{Lan83_FO}. Let $(x,y)$ denote the coordinates of $\Gm\times \Gm$ and let $V$
	be the subvariety of $S$ defined by the equation
	$xZ_1+yZ_2+Z_3=0$. 
	We are now studying the set
	$V\cap \Gamma$. Since $\dim(V)=2$ and $V$ is not a translate
	of an algebraic subgroup, by the Mordell-Lang conjecture proved by Faltings \cite{Fal91_DA,Fal94_TG},
	McQuillan \cite{McQ95_DP}, and Vojta \cite{Voj96_IP}, we have that $V\cap \Gamma$ is the union of a finite set and finitely many sets of the form
	$(\gamma+C)\cap \Gamma$
	where $\gamma\in\Gamma$, $C$ is an
	algebraic subgroup of $S$ with $\dim(C)=1$,
	and $\gamma+C\subset V$. 
	
	Assume that $\gamma+C$ is a translate of 
	an algebraic subgroup satisfying the above
	properties. 
	If the map $C\rightarrow \cE$ is non-constant then 
	$C$ has genus $1$ and, hence the map
	$C\rightarrow \Gm\times \Gm$ is constant since
	there cannot be a non-trivial algebraic group homomorphism
	from $C$ to $\Gm$. 
	Consequently $\gamma+C$ has the form
	$\{(\gamma_1,\gamma_2)\}\times\cE$ where
	$(\gamma_1,\gamma_2)\in \Gm\times\Gm$. Since
	$\gamma+C\subset V$, we have that 
	$\gamma_1Z_1+\gamma_2Z_2+Z_3=0$
	for every $[Z_1:Z_2:Z_3]\in \cE$, contradiction.
	Therefore the map $C\rightarrow \cE$ must be constant,
	in other words $C$ has the form $C_1\times \{O_{\cE}\}$
	where $C_1$ is an algebraic subgroup 
	of $\Gm\times\Gm$ with $\dim(C_1)=1$. Write $\gamma=(\gamma_x,\gamma_y,\gamma_{\cE})$
	with $(\gamma_x,\gamma_y)\in G\times G$ and $\gamma_{\cE}=:[\tilde{z}_1:\tilde{z}_2:\tilde{z}_3]\in \cE(F_0)$. Since
	$\gamma+C\subset V$, the translate of $C_1$
	by $(\gamma_x,\gamma_y)$ is given by the equation
	$\tilde{z}_1 x + \tilde{z}_2 y+\tilde{z}_3=0$. Equivalently, the algebraic group $C_1$ is given by the equation
	$\gamma_x^{-1}\tilde{z}_1 x+\gamma_y^{-1}\tilde{z}_2 y+\tilde{z}_3=0$. 
	This is
	possible only when $\tilde{z}_1\tilde{z}_2\tilde{z}_3=0$ and we finish the proof.
	\end{proof} 
	 
\begin{example}\label{eg:system P}
	Consider the system of equations
	\begin{align}
     Z_1^3+Z_2^3+9Z_3^3=0\label{eq:eg eq1}\\
     3^aZ_1+3^bZ_2+Z_3=0\label{eq:eg eq2}
    \end{align}
    with $a,b\in\Z$ and $[Z_1:Z_2:Z_3]\in \bP^2(F_0)$. Proposition~\ref{prop:finite} implies
    that there are only finitely many solutions
    outside the set
    $\{(m,m,[1:-1:0]):\ m\in\Z\}$. Later on, when $F_0=\Q$, we 
    will show that there does not exist 
    any solution satisfying $a>b\geq 0$
    confirming another conjecture of 
    Brassil-Reichstein \cite[Conjecture~14.3]{BR17_TH}.
\end{example}	 
	 
Let $n\geq 2$ be an integer, we recall the definition
of ``the general field extension" $E_n/F_n$ of degree $n$ 
over the base field $F_0$
from \cite[pp.~2]{BR17_TH}. Set $L_n:=F_0(x_1,\ldots,x_n)$, $F_n=L_n^{S_n}$, and $E_n:=L_n^{S_{n-1}}=F_n(x_1)$
where $x_1,\ldots,x_n$ are independent variables,
$S_n$ acts on $L_n$ by permuting $x_1,\ldots,x_n$ and 
$S_{n-1}$ acts on $L_n$ by permuting
$x_2,\ldots,x_n$. Theorem~\ref{thm:main 2} follows
from:
\begin{theorem}\label{thm:precise 2}
There is a finite subset $\cS$ of $\N\times\N$ depending
only on $F_0$ such that for every integer $n$
of the form $3^{k_1}+3^{k_2}+3^{k_3}$ with
integers $k_1>k_2>k_3\geq 0$
and $(k_1-k_3,k_2-k_3)\notin \cS$,
the following holds. 
 For every finite extension
$F'/F_n$ of degree prime to $3$, there does not
exist $0\neq \delta\in E':=F'\tensor_{F_n} E_n$
such that $\Tr_{E'/F'}(\delta)=\Tr_{E'/F'}(\delta^3)=0$.
In particular, there does not exist $0\neq \delta\in E_n$
such that $\Tr_{E_n/F_n}(\delta)=\Tr_{E_n/F_n}(\delta^3)=0$.
\end{theorem}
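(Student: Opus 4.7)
The plan is to suppose that such a $\delta$ exists, reduce the vanishing of $\Tr_{E'/F'}(\delta)$ and $\Tr_{E'/F'}(\delta^3)$ to a Diophantine system of the exact form treated by Proposition~\ref{prop:finite}, and then take $\cS$ to consist of the finitely many pairs $(k_1-k_3,k_2-k_3)$ that can possibly arise.

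First, I would reduce to a concrete étale-algebra setting. Since $E_n=F_n(x_1)$ and $F'/F_n$ is separable, $E'$ is an étale $F'$-algebra of dimension $n$; after enlarging $F'$ by a further prime-to-$3$ extension (which does not affect the statement) I can assume $E'=F'(\theta)$ is a field, so any $\delta\in E'$ is $f(\theta)$ for some polynomial $f\in F'[T]$ of degree less than $n$. Spreading out the data $(F',\theta,f)$ over a finitely generated subring of $F_0$ and specializing at a closed point preserves both trace conditions, reducing the problem to a statement over a number field or one-variable function field over $F_0$; here it is essential that $F_0$ is itself finitely generated.

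Second, I would carry out the $3$-adic analysis at the heart of the argument. Fix a place $v$ above $3$ and choose the specialization so that the Newton polygon of the minimal polynomial of $\theta$ at $v$ splits into three segments of slopes dictated by $k_1,k_2,k_3$, partitioning the $n$ conjugates of $\theta$ into blocks of sizes $3^{k_1},3^{k_2},3^{k_3}$. Evaluating $\sum_i f(\theta_i)$ and $\sum_i f(\theta_i)^3$ block by block and extracting the leading $v$-adic terms turns the two vanishing conditions into a linear equation
\[
3^{k_1-k_3}Z_1+3^{k_2-k_3}Z_2+Z_3=0
\]
together with a cubic $P(Z_1,Z_2,Z_3)=0$ defining a geometrically irreducible plane curve of genus~$1$, where $[Z_1:Z_2:Z_3]\in\bP^2(F_0)$ encodes the blocks of leading coefficients of $f$ (as in Example~\ref{eg:system P}, where $P=Z_1^3+Z_2^3+9Z_3^3$). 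With $G$ the finite-rank subgroup of $\overline{F_0}^{\,*}$ generated by $3$ and by finitely many units capturing the prime-to-$3$ extension data, Proposition~\ref{prop:finite} then yields only finitely many solutions with $Z_1Z_2Z_3\neq 0$; the corresponding pairs $(k_1-k_3,k_2-k_3)$ form the desired set $\cS$. The degenerate locus $Z_1Z_2Z_3=0$ must be excluded separately by a direct argument using $k_1>k_2>k_3\geq 0$, in the spirit of the trivial family $\{(m,m,[1:-1:0])\}$ in Example~\ref{eg:system P}.

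The main obstacle I expect is the second step: arranging a specialization whose Newton polygon cleanly isolates three blocks of the prescribed sizes, and then verifying that the leading $v$-adic data of the two trace relations are captured \emph{exactly} by a pair of equations of the form demanded by Proposition~\ref{prop:finite}, with a cubic of genus~$1$. This is where the specific arithmetic of the Brassil--Reichstein conjecture enters, and where one must simultaneously control the geometric irreducibility and genus of the cubic and ensure that the degenerate solutions admit an independent exclusion.
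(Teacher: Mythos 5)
Your high-level architecture (reduce the trace conditions to a Diophantine system, feed it into Proposition~\ref{prop:finite}, and collect the finitely many exceptional $(k_1-k_3,k_2-k_3)$ into $\cS$) matches the paper, but there are two substantive gaps.

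First, you propose to re-derive the reduction to a Diophantine system from scratch via spreading out, specializing, and analyzing $3$-adic Newton polygons. The paper does not do this: it simply invokes Theorem~1.4 and Remark~11.3 of Brassil--Reichstein \cite{BR17_TH}, which already package the trace vanishing conditions for the general extension $E_n/F_n$ (and prime-to-$3$ extensions thereof) as a concrete system of polynomial equations. Your sketch of the specialization/Newton-polygon route is, as you yourself flag, left at the level of an expectation, and it is exactly the part of the argument where all the difficulty would live; more seriously, your version of the reduction does not match the one that is actually available. What \cite{BR17_TH} supplies (after setting $a_1=k_1-k_3$, $a_2=k_2-k_3$) is the system
\[
3^{a_1}Z_1^3+3^{a_2}Z_2^3+Z_3^3=0,\qquad 3^{a_1}Z_1+3^{a_2}Z_2+Z_3=0,
\]
in which \emph{both} the cubic and the linear form have coefficients depending on $a_1,a_2$. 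You instead assert a fixed cubic $P(Z_1,Z_2,Z_3)=0$ of genus one together with a varying linear form; that is not what the reduction gives for general $F_0$.

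Second, and as a direct consequence of the above, you miss the key step that makes Proposition~\ref{prop:finite} applicable at all. Proposition~\ref{prop:finite} needs a \emph{single} fixed geometrically irreducible genus-$\geq 1$ curve $P=0$ and a linear form with coefficients ranging over a finite-rank group $G$. Since the cubic here varies with $(a_1,a_2)$, the proposition cannot be applied directly. The paper's trick is to write $a_i=3q_i+r_i$ with $r_i\in\{0,1,2\}$ and, for each of the nine fixed pairs $(r_1,r_2)$, absorb $3^{3q_i}$ into $Z_i$ so that the cubic becomes the fixed curve $3^{r_1}Z_1^3+3^{r_2}Z_2^3+Z_3^3=0$ while the linear equation becomes $9^{q_1}Z_1+9^{q_2}Z_2+Z_3=0$ with $9^{q_1},9^{q_2}$ lying in the cyclic (hence finite-rank) group $\langle 9\rangle$. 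Only then does Proposition~\ref{prop:finite} apply, after which the degenerate $Z_1Z_2Z_3=0$ locus is excluded by elementary congruence comparisons between the $r_i$ and $q_i$. Without this reparametrization your argument stalls at the point of invoking Proposition~\ref{prop:finite}, even granting the reduction. Your description of the group $G$ (``generated by $3$ and by finitely many units capturing the prime-to-$3$ extension data'') also does not match: the prime-to-$3$ extension is already handled inside \cite{BR17_TH}, and the group that actually appears is just $\langle 9\rangle$.
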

\begin{proof}
From \cite[Theorem~1.4]{BR17_TH}, \cite[Remark~11.3]{BR17_TH}, and put $a_1=k_1-k_3$ and $a_2=k_2-k_3$, it suffices to prove that the system of
equations 
	\begin{align}
     3^{a_1}Z_1^3+3^{a_2}Z_2^3+Z_3^3=0\label{eq:thm2 eq1}\\
     3^{a_1}Z_1+3^{a_2}Z_2+Z_3=0\label{eq:thm2 eq2}
    \end{align}
has only finitely many solutions
$(a_1,a_2,[Z_1:Z_2:Z_3])$ where
$[Z_1:Z_2:Z_3]\in \bP^2(F_0)$
and $a_1>a_2> 0$ are integers.

Write $a_i=3q_i+r_i$ with $q_i\in \Z$ and $r_i\in\{0,1,2\}$ for
$i=1,2$. It suffices
to show that for every \emph{fixed} pair
$(r_1,r_2)\in \{0,1,2\}^2$,
the system of equations
\begin{align}
     3^{r_1}Z_1^3+3^{r_2}Z_2^3+Z_3^3=0\label{eq:system q eq1}\\
     9^{q_1}Z_1+9^{q_2}Z_2+Z_3=0\label{eq:system q eq2}
\end{align}
has only finitely many solutions $(q_1,q_2,[Z_1:Z_2:Z_3])$
where $[Z_1:Z_2:Z_3]\in \bP^2(F_0)$, 
$q_1$ and $q_2$ are integers, and 
$3q_1+r_1>3q_2+r_2>0$. This last condition implies
$q_1>q_2\geq 0$. 

By Proposition~\ref{prop:finite}, it remains to consider
solutions satisfying $Z_1Z_2Z_3=0$. If $Z_3=0$, we have
$-(Z_2/Z_1)^3=3^{r_1-r_2}$,  
$-Z_2/Z_1=9^{q_1-q_2}$, and hence
$6\leq 6(q_1-q_2)=r_1-r_2$, contradiction.
Similarly, if $Z_2=0$, we have
$6\leq 6q_1=r_1$, contradiction. Finally, if
$Z_1=0$, we have $6q_2=r_2$ which implies
$q_2=r_2=0$ (otherwise $6\leq 6q_2=r_2$), contradicting 
the condition $3q_2+r_2>0$. This finishes the proof.
\end{proof}	
	
	\section{Proof of Theorem~\ref{thm:main 1}}
Throughout this section, let $F_0=\Q$. Let
$E_n/F_n$ be the general field extension of degree $n$
over $F_0=\Q$ as in the previous section. Theorem~\ref{thm:main 1} follows from:
\begin{theorem}\label{thm:precise 1}
For every $n$ of the form $3^{k_1}+3^{k_2}+3^{k_3}$
with integers $k_1>k_2>k_3\geq 0$ and for every 
finite extension $F'/F_n$ of degree prime to $3$,
there does not exist $0\neq \delta\in E':=F'\tensor_{F_n}E_n$
such that $\Tr_{E'/F'}(\delta)=\Tr_{E'/F'}(\delta^3)=0$.
In particular, there does not exist $0\neq \delta\in E_n$ such that $\Tr_{E_n/F_n}(\delta)=\Tr_{E_n/F_n}(\delta^3)=0$.
\end{theorem}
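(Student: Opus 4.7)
The plan is to combine the Diophantine reduction used in Theorem~\ref{thm:precise 2} with a sharper 3-adic valuation analysis and a descent in the ring of Eisenstein integers $\Z[\omega]$, where $\omega = e^{2\pi i/3}$, in order to upgrade finiteness into nonexistence over $F_0=\Q$.

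By \cite[Theorem~1.4]{BR17_TH} and \cite[Remark~11.3]{BR17_TH}, with $a_1=k_1-k_3$ and $a_2=k_2-k_3$, the conclusion amounts to showing that the system
\begin{equation*}
3^{a_1}Z_1^3+3^{a_2}Z_2^3+Z_3^3=0, \qquad 3^{a_1}Z_1+3^{a_2}Z_2+Z_3=0
\end{equation*}
has no solution $[Z_1:Z_2:Z_3]\in\bP^2(\Q)$ for any integers $a_1>a_2>0$. The degenerate case $Z_1Z_2Z_3=0$ is dispatched exactly as in the proof of Theorem~\ref{thm:precise 2}. Otherwise, I would clear denominators so that $Z_i\in\Z$ are coprime, set $v_i=v_3(Z_i)$, and run a 3-adic case analysis: the linear equation forces $v_3(Z_3)\geq a_2\geq 1$, and comparing the three 3-adic valuations in each of the linear and cubic equations (each minimum must be attained at least twice) eliminates every configuration except $v_1=0$, $v_2=m$, $v_3=a_2+m$ with $3m=a_1-a_2$. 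In particular this dispatches the case $3\nmid(a_1-a_2)$, which can alternatively be read off the 3-adic Newton polygon of the cubic in $Z_1/Z_2$ obtained by eliminating $Z_3$.

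In the remaining case $a_1-a_2=3m$, setting $z_1=Z_1$, $z_2=Z_2/3^m$, $z_3=Z_3/3^{a_2+m}$ yields pairwise coprime integers coprime to $3$ satisfying
\begin{equation*}
z_1^3+z_2^3+3^{2a_2}z_3^3=0, \qquad 3^{2m}z_1+z_2+z_3=0.
\end{equation*}
Factor $z_1^3+z_2^3=(z_1+z_2)(z_1+\omega z_2)(z_1+\omega^2 z_2)$ in $\Z[\omega]$ and work with the prime $\lambda=1-\omega$ above $3$. A direct computation gives $v_\lambda(z_1+z_2)=4a_2-2$ and $v_\lambda(z_1+\omega^i z_2)=1$ for $i=1,2$, while the pairwise coprimality of the $z_i$ forces the three factors to be coprime in $\Z[\omega]$ away from $\lambda$. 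Unique factorization in the PID $\Z[\omega]$ then yields a parameterization
\begin{equation*}
z_1+z_2=3^{2a_2-1}s^3, \qquad z_1+\omega z_2=\epsilon\lambda b^3,
\end{equation*}
for some $s\in\Z$ coprime to $3$, $b=x+y\omega\in\Z[\omega]$ with $x+y$ coprime to $3$, and a unit $\epsilon\in\Z[\omega]^\times$. Taking norms gives $N(b)=t$ with $t^3=(z_1^2-z_1z_2+z_2^2)/3$, and the cubic equation forces $st=3^{2m}z_1+z_2$, which is precisely the content of the line equation under the parametrization.

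The final step is a case analysis over the six units $\epsilon\in\Z[\omega]^\times$. A quick $\lambda$-adic valuation check pins $\epsilon\in\{\pm 1\}$ for $a_2\geq 2$ and $\epsilon\in\{\pm\omega,\pm\omega^2\}$ for $a_2=1$, giving a clean dichotomy. For $a_2=1$, expanding the parametrization gives $s^3=\pm(x^3-3x^2y+y^3)$ (or the $x\leftrightarrow y$ variant), so that $s\equiv\pm(x+y)\pmod 3$; reducing the identity $st=3^{2m}z_1+z_2$ modulo $3$ (using $t\equiv 1\pmod 3$ and $z_2\equiv\mp(x+y)\pmod 3$) instead forces $s\equiv\mp(x+y)\pmod 3$, an incompatibility since $x+y\not\equiv 0\pmod 3$. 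For $a_2\geq 2$ with $\epsilon=\pm 1$, the cube condition becomes $xy(x-y)=\pm 3^{2a_2-3}s^3$; combining this with the line constraint and the pairwise coprimality of the $z_i$ produces a system of Diophantine conditions that should be ruled out by a further 3-adic descent, essentially by iterating the same $\Z[\omega]$ factorization. The principal technical obstacle is closing this last case uniformly in $a_2\geq 2$ and $m\geq 1$; the reduction, the valuation analysis, and the $\Z[\omega]$ descent are otherwise essentially formal.
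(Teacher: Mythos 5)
Your reduction via \cite[Theorem~1.4]{BR17_TH} and \cite[Remark~11.3]{BR17_TH} and the subsequent $3$-adic normalization are fine, and the mod-$3$ congruence you run for the case $a_2=1$ looks plausible. But the proposal has a genuine, admitted gap: you do not close the case $a_2\geq 2$, and the plan to do so by ``iterating the same $\Z[\omega]$ factorization'' is very unlikely to work. The classical Eisenstein-integer descent of this type is the rank-$0$ argument behind Fermat's Last Theorem for exponent $3$. Here the relevant cubic curve $Z_1^3+Z_2^3+9Z_3^3=0$ (equivalently $y^2=x^3-48$) has rank $1$ over $\Q$, with infinitely many rational points; a pure $\Z[\omega]$ descent cannot manufacture a contradiction, because the descent step simply lands you back on another rational point of the same positive-rank curve. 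What actually eliminates solutions is a quantitative incompatibility between two kinds of growth, and your proposal does not track either.

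The paper handles this by mapping solutions to points $[N]G$ of $\cE(\Q)$, where $\cE:y^2=x^3-48$ and $G=(4,4)$ is a generator of the cyclic group $\cE(\Q)$, and then playing off two effective estimates. On one side, Proposition~\ref{prop:bound N} shows $\lvert x([n]G)\rvert_3=3^{2m}$ when $n=3^m\ell$ with $3\nmid\ell$, which forces $3^b\mid N$; combined with the explicit lower bound for $\hhat(G)$ and Silverman's explicit height comparison, this makes $h(\tilde x)$ grow roughly like $N^2\geq 9^b$. On the other side, Bennett's effective irrationality measure for $\sqrt[3]{9}$ (an effective Thue-equation bound) caps $h(\tilde x)$ by a \emph{linear} function of $b$. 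Comparing the quadratic-exponential lower bound with the linear upper bound forces $b<3$, and the finitely many remaining pairs $(N,b)$ are checked directly. None of this machinery — the Mordell--Weil generator, the $3$-adic denominator growth on $\cE$, Bennett's bound, Silverman's explicit height estimates — appears in your proposal, and it is precisely what is needed to pass from the (ineffective) finiteness of Proposition~\ref{prop:finite} to the nonexistence assertion. To complete your argument along the lines you propose, you would need to import an effective quantitative input of comparable strength; the ``further $3$-adic descent'' step as written will not terminate.
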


As explained in \cite[Chapter~14]{BR17_TH}, Theorem~\ref{thm:precise 1} follows from another conjecture
of Brassil-Reichstein \cite[Conjecture~14.3]{BR17_TH}:
\begin{conjecture}[Brassil-Reichstein]\label{conj:system BR}
The system of equations
\begin{align}
     Z_1^3+Z_2^3+9Z_3^3=0\label{eq:BR eq1}\\
     3^aZ_1+3^bZ_2+Z_3=0\label{eq:BR eq2}
\end{align}
    has no solution $(a,b,[Z_1:Z_2:Z_3])$
    where $a>b\geq 0$ are integers
    and  $[Z_1:Z_2:Z_3]\in \bP^2(\Q)$.
\end{conjecture}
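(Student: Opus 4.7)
My plan is to argue by contradiction via a $3$-adic descent. Suppose $(a,b,[Z_1:Z_2:Z_3])$ is a solution to \eqref{eq:BR eq1}--\eqref{eq:BR eq2} with $a>b\geq 0$; after clearing denominators I may assume $Z_1,Z_2,Z_3\in\Z$ with $\gcd(Z_1,Z_2,Z_3)=1$, and I set $c:=a-b\geq 1$. The degenerate case $Z_3=0$ forces $Z_2=-3^c Z_1$ via \eqref{eq:BR eq2}, and then \eqref{eq:BR eq1} gives $(1-3^{3c})Z_1^3=0$, so $Z_1=Z_2=0$, a contradiction; the cases $Z_1=0$ and $Z_2=0$ are ruled out similarly.

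A comparison of $3$-adic valuations in \eqref{eq:BR eq1}, together with the congruence $a^3\equiv\pm 1\pmod 9$ for integers $a$ coprime to $3$, forces $v_3(Z_1)=v_3(Z_2)=0$. The linear equation then yields $v_3(Z_3)=b$, and combined with the factorization $Z_1^3+Z_2^3=(Z_1+Z_2)\bigl((Z_1+Z_2)^2-3Z_1Z_2\bigr)$ one obtains $v_3(Z_1+Z_2)=1+3b$. Setting $M:=(Z_1+Z_2)/3^{1+3b}$, $N:=3^c Z_1+Z_2$ (so that $Z_3=-3^b N$ from \eqref{eq:BR eq2}), and $Q:=3^{1+6b}M^2-Z_1 Z_2$, the cubic rewrites as $MQ=N^3$. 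A short coprimality check gives $\gcd(M,Q)=1$, so $M=m^3$, $Q=q^3$, $N=mq$ for coprime integers $m,q$ both coprime to $3$.

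Substituting back and eliminating $Z_1,Z_2$ reduces the problem to the single Diophantine equation
\begin{equation}\label{eq:plan-star}
(3^c-1)^2 q^3 - m^2 q^2 + 3^{1+3b}(3^c+1)\,m^4 q - 3^{1+6b}(3^{2c}+3^c+1)\,m^6 = 0.
\end{equation}
Grouping \eqref{eq:plan-star} as $q\cdot A = m^2 \cdot B$ with $A=(3^c-1)^2 q^2 + 3^{1+3b}(3^c+1)m^4$ and $B=q^2+3^{1+6b}(3^{2c}+3^c+1)m^4$, coprimality forces $m\mid 3^c-1$ and $q\mid 3^{2c}+3^c+1$. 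Writing $3^c-1=mm'$ and $3^{2c}+3^c+1=qq'$, \eqref{eq:plan-star} rewrites as $m'^2 q^2+3^{1+3b}(3^c+1)m^2 = q+3^{1+6b} q' m^4$, and a comparison of sizes of the two sides severely restricts the allowed $(b,c)$.

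The hard part will be converting these divisibility and size constraints into an outright non-existence statement. Viewing \eqref{eq:plan-star} as a quadratic in $3^{1+3b}$ and computing its discriminant, one finds that a solution must satisfy a norm equation of the form $m^2+3L^2=4q^2 q'$ for some $L\in\Z$, which can be analyzed in $\Z[\omega]$. I expect to close the argument by combining this Mordell-type constraint with the divisibilities $m\mid 3^c-1$ and $q\mid 3^{2c}+3^c+1$ and with the CM structure of the elliptic curve $X^3+Y^3+9Z^3=0$ over $\Q$: multiplication by $1-\omega$ provides a natural $3$-isogeny, and the resulting $3$-descent should leave only finitely many candidates for $(b,c,m,q)$, each ruled out by a direct computation modulo a small power of $3$.
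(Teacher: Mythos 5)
Your algebraic setup is correct and is a reasonable reorganization of the same structure the paper exploits. Writing $c=a-b$, you correctly force $3\nmid Z_1Z_2$, $v_3(Z_3)=b$, $v_3(Z_1+Z_2)=1+3b$, and the factorization $Z_1^3+Z_2^3=(Z_1+Z_2)(Z_1^2-Z_1Z_2+Z_2^2)$ with $\gcd$ exactly $3$ gives your $MQ=N^3$ with $\gcd(M,Q)=1$, hence $M=m^3$, $Q=q^3$, $N=mq$. (The paper's $\tilde z_1+\tilde z_2=3^{3b+1}\alpha^3$, $\tilde z_1^2-\tilde z_1\tilde z_2+\tilde z_2^2=3\beta^3$ is exactly this with $\alpha=m$, $\beta=q$; the paper then passes to the point $\tilde x=4\beta/(3^{2b}\alpha^2)$ on $y^2=x^3-48$.) Your elimination to \eqref{eq:plan-star} checks out, and the divisibilities $m\mid 3^c-1$ and $q\mid 3^{2c}+3^c+1$ follow as you say.

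The gap is in the final paragraph, and it is not a small one. Your divisibilities bound $m$ and $q$ in terms of $c$, and a size comparison in \eqref{eq:plan-star} then bounds $b$ in terms of $c$; but nothing in your argument bounds $c$ itself. The underlying curve $Z_1^3+Z_2^3+9Z_3^3=0$ has rank $1$ over $\Q$ (it is $y^2=x^3-48$, with Mordell--Weil group generated by $G=(4,4)$), so there are infinitely many rational points, and the $3$-isogeny/$(1-\omega)$-descent you invoke only re-derives the structure of the Mordell--Weil group; it produces no contradiction, and it cannot, because for each point $[N]G$ with $3^b\parallel N$ the $3$-adic constraints are satisfied automatically. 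What is missing is an \emph{archimedean} input that plays off against the $3$-adic one. The paper supplies two such inputs: (1) Bennett's effective irrationality measure for $\sqrt[3]{9}$, giving $\lvert \tilde z_2^3+9\tilde z_3^3\rvert\geq \tfrac13\max\{\lvert\tilde z_2\rvert,\lvert 3\tilde z_3\rvert\}^{0.24}$, which combined with $\tilde z_1\mid 3^{3b+2}-1$ gives an \emph{upper} bound $\max\{\lvert\tilde z_2\rvert,\lvert 3\tilde z_3\rvert\}<3^{37.5b+30}$; and (2) the observation $v_3\bigl(x([n]G)\bigr)=-2m$ for $n=3^m\ell$, which forces $3^b\mid N$, together with Silverman's explicit bound $-2.13<\hhat(P)-\tfrac12 h(x(P))<2.222$ and $\hhat(G)>0.25$, giving a \emph{lower} bound $h(\tilde x)>0.5\cdot 3^{2b}-4.444$. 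Only the tension between these two bounds yields $b<3$, after which a finite check finishes. Your discriminant/norm-form idea is a congruence-style constraint that lives entirely on the $3$-adic side, so it cannot by itself rule out the infinitely many candidate points; without a quantitative Diophantine approximation or height-lower-bound step, the ``finitely many candidates'' you hope for never materialize.
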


In Example~\ref{eg:system P}, we explain why there are only finitely many solutions
$(a,b,[Z_1:Z_2:Z_3])$. This follows from Proposition~\ref{prop:finite} which uses the Mordell-Lang conjecture proved by Faltings, McQuillan, and Vojta. On the other hand, to prove that there is no solution, we need a different method using effective estimates. In fact, we establish a slightly stronger result than the statement
of Conjecture~\ref{conj:system BR}:
\begin{theorem}\label{thm:new system}
The only solution $(w,b,[Z_1:Z_2:Z_3])$ of the system
\begin{align}
     Z_1^3+Z_2^3+9Z_3^3=0\label{eq:new eq1}\\
     wZ_1+3^bZ_2+Z_3=0\label{eq:new eq2}
\end{align}
    with $w,b\in\Z$, $b\geq 0$, $3^{b+1}\mid w$, 
    and  $[Z_1:Z_2:Z_3]\in \bP^2(\Q)$ is $(0,0,[2:1:1])$.
\end{theorem}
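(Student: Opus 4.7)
The plan is to work $3$-adically and then factor over $\Z[\omega]$ (with $\omega$ a primitive cube root of unity) to reduce the system to a short list of Thue equations. Writing $w = 3^{b+1}m$ (using $3^{b+1}\mid w$) and substituting $Z_3 = -3^b(3mZ_1+Z_2)$ into the cubic yields, after normalizing to $\gcd(Z_1,Z_2,Z_3)=1$, the key relation
\[
Z_1^3 + Z_2^3 = 3^{3b+2}(Z_2 + 3mZ_1)^3.
\]
The boundary cases $Z_1Z_2Z_3=0$ I would dispatch by direct substitution; each either produces the stated solution $(0,0,[2:1:1])$ or contradicts the divisibility condition $3^{b+1}\mid w$.

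For the main body I would first carry out a precise $3$-adic valuation analysis: using $Z_1^3+Z_2^3=(Z_1+Z_2)(Z_1^2-Z_1Z_2+Z_2^2)$ together with the elementary identity $\gcd(Z_1+Z_2,\,Z_1^2-Z_1Z_2+Z_2^2)\mid 3$, every primitive nonzero solution must satisfy $v_3(Z_1)=v_3(Z_2)=0$, $v_3(Z_3)=b$, $v_3(Z_1+Z_2)=3b+1$, and $v_3(Z_1^2-Z_1Z_2+Z_2^2)=1$. I would then factor $Z_1^3+Z_2^3 = (Z_1+Z_2)(Z_1+\omega Z_2)(Z_1+\omega^2 Z_2)$ in $\Z[\omega]$, where the ramified prime above $3$ is $\pi=1-\omega$. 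The three factors are pairwise coprime outside $\pi$, so since $\Z[\omega]$ is a principal ideal domain with only six units, each factor is a unit times $\pi^{e_i}$ times a cube; in particular $Z_1+\omega Z_2 = u\cdot\pi\cdot(p+q\omega)^3$ for some unit $u\in\{1,\omega,\omega^2\}$ and some $p,q\in\Z$. Taking real and imaginary parts expresses $(Z_1,Z_2)$ as explicit integer cubic forms in $(p,q)$ indexed by the three possible units, together with the companion norm relation $p^2-pq+q^2 = |Z_3|/3^b$.

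Finally, the integrality of $m$ translates into a congruence constraint on $(p,q)$, reducing the whole problem to a finite, explicit list of Thue equations in $(p,q)$ which one can enumerate either by hand (the right-hand sides that actually occur are very small) or by a short reduction modulo a well-chosen auxiliary prime. The hard part will be the factorization step: tracking the residue classes of units in $\Z[\omega]^\times/(\Z[\omega]^\times)^3$ through the descent and verifying that none of the resulting Thue equations admits a solution outside the trivial one; a secondary technical concern is accounting correctly for the ramification of $3$ at $\pi$, so that the descent really terminates at the base case $(0,0,[2:1:1])$ rather than generating a further drop.
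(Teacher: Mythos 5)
Your opening move (write $w=3^{b+1}m$, substitute, and carry out the $3$-adic valuation analysis of $Z_1+Z_2$ and $Z_1^2-Z_1Z_2+Z_2^2$) matches the paper's equation~\eqref{eq:alpha and beta}, and the boundary-case analysis is fine. However, the core of your plan --- that factoring $Z_1^3+Z_2^3$ in $\Z[\omega]$ ``reduces the whole problem to a finite, explicit list of Thue equations'' --- has a genuine gap, and it is the gap you flagged as a ``secondary technical concern.'' The curve $Z_1^3+Z_2^3+9Z_3^3=0$ has genus $1$ and positive Mordell--Weil rank, so the $\Z[\omega]$ descent cannot parametrize its rational points by binary forms in $(p,q)$ alone. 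Writing $Z_1+\omega Z_2=u\,\pi(p+q\omega)^3$ and expanding does express $Z_1,Z_2$ as cubic forms in $(p,q)$, but it does \emph{not} make $Z_3$ automatically integral: one still needs $Z_1+Z_2$ (which for $u=1$ equals $9pq(p-q)$) to be of the form $\pm 3^{3b+1}\alpha^3$. That is itself a genus-one condition, i.e.\ a self-descent back to a curve of the same shape, and the parameter $b$ (and $\alpha$) remains unbounded. Concretely, the equations you would get are of Thue--Mahler type, such as $pq(p-q)=3^{3b-1}\alpha^3$ with $p,q,p-q$ pairwise coprime, with $b$ and $\alpha$ free --- not a finite list of Thue equations. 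A smaller slip: your ``companion norm relation'' should read $p^2-pq+q^2=\beta$ where $\tilde z_3=\mp 3^b\alpha\beta$ and $Z_1+Z_2=\pm3^{3b+1}\alpha^3$; the identity $p^2-pq+q^2=|Z_3|/3^b$ implicitly assumes $\alpha=\pm1$, which is not automatic.

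The paper does use the factorization \eqref{eq:alpha and beta} you would derive, but the effectivity comes from entirely different ingredients that your proposal does not supply. To bound $b$, the paper combines (a) the divisibility $\tilde z_1\mid 3^{3b+2}-1$, (b) Bennett's explicit irrationality measure for $\sqrt[3]{9}$, giving $|\tilde z_2^3+9\tilde z_3^3|\ge \tfrac13\max\{|\tilde z_2|,|3\tilde z_3|\}^{0.24}$, (c) Selmer's determination that $\cE(\Q)$ for $y^2=x^3-48$ is infinite cyclic generated by $(4,4)$, (d) the $3$-adic valuation formula $|x([n]G)|_3=3^{2m}$ for $n=3^m\ell$, which forces $3^b\mid N$, and (e) Silverman's canonical-height bound to cap $N$. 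Together these give $3^{2b}\le N^2<81b+76$, hence $b<3$, and then a finite check. None of these steps is visible in the $\Z[\omega]$ descent alone; without them (or an equivalent effective input) your reduction does not terminate, and the case enumeration you envision never becomes finite.
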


We now spend the rest of this paper proving
Theorem~\ref{thm:new system}. From \eqref{eq:new eq1},
we cannot have $Z_1Z_2=0$. If $Z_3=0$ then
$Z_1/Z_2=-1$ and \eqref{eq:new eq2} gives
$w=3^b$ violating the condition
$3^{b+1}\mid w$. Let $(\tilde{w},\tilde{b},[\tilde{z}_1:\tilde{z}_2:\tilde{z}_3])$ be a solution,
and we can assume that $\tilde{z}_1$, $\tilde{z}_2$, and $\tilde{z}_3$ are 
non-zero integers with
$\gcd(\tilde{z}_1,\tilde{z}_2,\tilde{z}_3)=1$.

From $\gcd(\tilde{z}_1,\tilde{z}_2,\tilde{z}_3)=1$, we have $3\nmid \tilde{z}_1\tilde{z}_2$ and $-\tilde{z}_3=3^b \tilde{z}_4$ for some integer
$\tilde{z}_4$ with $3\nmid \tilde{z}_4$. Hence we have 
$\tilde{z}_1^3\mid 3^{3b+2}\tilde{z}_4^3-\tilde{z}_2^3$ and
$\tilde{z}_1\mid \tilde{z}_4-\tilde{z}_2$. This implies
\begin{equation}\label{eq:z1 divides 3^3b+2-1}
\tilde{z}_1\mid 3^{3b+2}-1.
\end{equation}
We now have:
\begin{equation}\label{eq:upper bound 1}
\vert \tilde{z}_2^3+9\tilde{z}_3^3\vert =\vert \tilde{z}_1^3\vert <3^{9b+6}.
\end{equation}
A result of Bennett \cite[Theorem~6.1]{Ben97_EM} gives:
\begin{equation}\label{eq:Mike}
\vert \tilde{z}_2^3+9\tilde{z}_3^3\vert \geq \frac{1}{3}\max\{\vert \tilde{z}_2\vert,\vert 3\tilde{z}_3\vert\}^{0.24}.
\end{equation}
Combining \eqref{eq:upper bound 1} and \eqref{eq:Mike}, we have:
\begin{equation}\label{eq:upper bound 2}
\max\{\vert \tilde{z}_2\vert,\vert 3\tilde{z}_3\vert\}<3^{37.5b+30}.
\end{equation}

This is our first step. Our next step is to give a lower bound for a quantity
that is closely related to 
$\max\{\vert \tilde{z}_2\vert,\vert 3\tilde{z}_3\vert\}$
and such a lower bound is much larger than 
$3^{37.5b+30}$
when $b$ is large. This will yield a strong upper bound on $b$.

Since $\tilde{z}_1^2-\tilde{z}_1\tilde{z}_2+\tilde{z}_2^2=(\tilde{z}_1+\tilde{z}_2)^2-3\tilde{z}_1\tilde{z}_2$ we have
that 
$\gcd (\tilde{z}_1+\tilde{z}_2,\tilde{z}_1^2-\tilde{z}_1\tilde{z}_2+\tilde{z}_2^2)\in\{1,3\}$ depending on
whether $3$ divides $\tilde{z}_1+\tilde{z}_2$. Moreover, if
$3\mid \tilde{z}_1+\tilde{z}_2$, then $9\nmid \tilde{z}_1^2-\tilde{z}_1\tilde{z}_2+\tilde{z}_2^2$. Therefore, \eqref{eq:new eq1}
gives:
\begin{equation}\label{eq:alpha and beta}
\tilde{z}_1+\tilde{z}_2=3^{3b+1}\alpha^3,\ \tilde{z}_1^2-\tilde{z}_1\tilde{z}_2+\tilde{z}_2^2=3\beta^3,\ \alpha\beta=\tilde{z}_4,\ 3\nmid \alpha\beta,\ \gcd(\alpha,\beta)=1.
\end{equation}

We wish to write the cubic curve given by 
equation~\eqref{eq:new eq1} 
into the standard Weierstrass form $y^2=x^3+Ax+B$. We have:
\begin{align}\label{eq:U and V}
\begin{split}
\frac{1}{4}(Z_1+Z_2)^3+\frac{3}{4}(Z_1+Z_2)(Z_1-Z_2)^2&=-9Z_3^3\\
\frac{1}{4}+\frac{3}{4}V^2&= 9U^3\\
V^2&=12U^3-\frac{1}{3}
\end{split}
\end{align}
with $U=\displaystyle\frac{-Z_3}{Z_1+Z_2}$ and $V=\displaystyle\frac{Z_1-Z_2}{Z_1+Z_2}$. Overall, we have:
\begin{equation}\label{eq:elliptic curve}
y^2=x^3-48,\ x=12U=\frac{-12Z_3}{Z_1+Z_2},\ y=12V=\frac{12(Z_1-Z_2)}{Z_1+Z_2}.
\end{equation}

Let $\cE$ be the elliptic curve given by the equation $y^2=x^3-48$. 
By a result of Selmer \cite[p.~357]{Sel51_TD}
as noted in \cite[Section~14]{BR17_TH}, we have that 
$\cE(\Q)$ is cyclic and generated by 
the point $G=(4,4)$. For every $P\in \cE(\Qbar)$, let 
$x(P)$ denote its $x$-coordinate. 

By \eqref{eq:U and V} and \eqref{eq:elliptic curve},
the solution $(\tilde{w},\tilde{b},[\tilde{z}_1:\tilde{z}_2:\tilde{z}_3])$ gives the point $(\tilde{x},\tilde{y})\in \cE(\Q)$
with
\begin{equation}\label{eq:tilde x and tilde y}
\tilde{x}=\frac{-12\tilde{z}_3}{\tilde{z}_1+\tilde{z}_2}= \frac{12\cdot 3^b \alpha\beta}{3^{3b+1}\alpha^3}=\frac{4\beta}{3^{2b}\alpha^2}.
\end{equation}
Let $N\geq 1$ such that $\tilde{x}=x([N] G)$. Let $\vert\cdot\vert_3$ denote the $3$-adic absolute value on $\Q$. 
By inspecting
the powers of $3$ that appear in the denominator
of $x(G)$, $x([2]G)$,$\ldots$ 
we observe that $N$ can be bounded below due to
$\vert \tilde{x}\vert_3=3^{2b}$. Indeed, we have the following:
\begin{proposition}\label{prop:bound N}
For $n\in\N$, write $n=3^m\ell$ with $\gcd(n,\ell)=1$, then
we have:
$$\vert x([n]G)\vert_3=3^{2m}.$$
\end{proposition}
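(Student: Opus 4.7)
The plan is to analyze $\cE$ over $\Q_3$ via its reduction. The model $y^2 = x^3 - 48$ is minimal at $3$, and its reduction is the cuspidal cubic $y^2 = x^3$ over $\F_3$, so $\cE$ has additive (Kodaira type II) reduction. Accordingly $\cE(\Q_3) = \cE_0(\Q_3)$, the smooth locus of the reduction is isomorphic to $\mathbb{G}_a$, and $\cE_0(\Q_3)/\cE_1(\Q_3) \cong \mathbb{G}_a(\F_3) \cong \F_3$. I will use the standard filtration $\cE_0(\Q_3) \supseteq \cE_1(\Q_3) \supseteq \cdots$: with the parameter $t = -x/y$ at infinity, a point $P \in \cE_0(\Q_3)$ lies in $\cE_n \setminus \cE_{n+1}$ (for $n \geq 1$) iff $v_3(t(P)) = n$, equivalently $v_3(x(P)) = -2n$. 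Thus Proposition~\ref{prop:bound N} is equivalent to the assertion that $[n]G \in \cE_m \setminus \cE_{m+1}$ whenever $n = 3^m \ell$ with $\gcd(\ell, 3) = 1$.

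The key input is the behavior of $[3]$ on the formal group. The formal group $\hat{\cE}$ of $\cE$ over $\Z_3$ reduces modulo $3$ to the formal group of $\bar{\cE}^{\mathrm{sm}}$ at infinity, which is $\hat{\mathbb{G}}_a$; hence $[3]_{\hat{\cE}}(T) \equiv 0 \pmod{3}$ as an element of $\Z_3[[T]]$. Writing $[3]_{\hat{\cE}}(T) = 3 f(T)$ with $f(T) = T + O(T^2) \in \Z_3[[T]]$, for $v_3(t) = n \geq 1$ we obtain $v_3([3]_{\hat{\cE}}(t)) = 1 + v_3(f(t)) = n+1$. Therefore $[3]$ maps $\cE_n \setminus \cE_{n+1}$ into $\cE_{n+1} \setminus \cE_{n+2}$ for $n \geq 1$. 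On the other hand, for $\gcd(\ell,3) = 1$ multiplication by $\ell$ is an automorphism of each quotient $\cE_n/\cE_{n+1} \cong \F_3$, so $[\ell]$ preserves each level $\cE_n \setminus \cE_{n+1}$.

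To conclude, the generator $G = (4,4)$ reduces to $(1,1) \in \bar{\cE}^{\mathrm{sm}}(\F_3)$, a nonzero class in $\cE_0/\cE_1 \cong \F_3$ (under the identification $\bar{\cE}^{\mathrm{sm}} \cong \mathbb{G}_a$ given by $(x,y) \mapsto y/x$); so $G \in \cE_0 \setminus \cE_1$, handling $m = 0$. The direct computation carried out in the excerpt gives $x([3]G) = 73/9$, so $[3]G \in \cE_1 \setminus \cE_2$. Iterating the formal-group shift yields $[3^m]G \in \cE_m \setminus \cE_{m+1}$ for every $m \geq 1$; applying $[\ell]$ with $\gcd(\ell,3) = 1$ keeps us at the same level, giving $v_3(x([n]G)) = -2m$ as required. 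I expect the main obstacle to be justifying the formal-group statement $[3]_{\hat{\cE}}(T) \equiv 0 \pmod{3}$; an alternative that avoids formal-group machinery is to derive the shift-by-one property directly from the tripling formula using the $3$-division polynomial $\psi_3 = 3x^4 - 576 x$ and tracking $3$-adic cancellations, which is elementary but calculational.
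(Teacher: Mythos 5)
Your proof is correct, but it takes a genuinely different route from the paper's. The paper proceeds entirely by hand: it verifies the values of $x(G)$, $x([2]G)$, $x([3]G)$, proves a ``Claim 1'' that adding $[3]G$ to a point with $\vert x\vert_3=1$ preserves $\vert x\vert_3=1$ (using the explicit chord law through $(73/9,595/27)$), and then proves a ``Claim 2'' that tripling any point with $\vert x(P)\vert_3\geq 1$ multiplies $\vert x(P)\vert_3$ by $9$ (by tracking $3$-adic valuations through the $3$-division polynomial $\psi_3=3x^4-576x$ and $\phi_3=x\psi_3^2-\psi_2\psi_4$). Your argument replaces both claims by the structure theory of $\cE(\Q_3)$: the reduction $y^2=x^3$ over $\F_3$ is cuspidal, so $\cE_0/\cE_1\cong\mathbb{G}_a(\F_3)\cong\F_3$ and the formal group reduces to $\hat{\mathbb{G}}_a$, giving $[3]_{\hat\cE}(T)\in 3\Z_3[[T]]$ with leading coefficient $3$; this yields the level-shift $\cE_n\setminus\cE_{n+1}\to\cE_{n+1}\setminus\cE_{n+2}$ for $n\geq 1$ in one stroke, while $[\ell]$ with $\gcd(\ell,3)=1$ preserves each level because it is multiplication by a unit on $\F_3$. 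Both proofs still need an explicit check that $[3]G$ lies in $\cE_1\setminus\cE_2$ (you invoke $x([3]G)=73/9$; the paper gets this from Claim~2 applied at $\vert x(G)\vert_3=1$). The conceptual approach buys economy and makes the $3\nmid n$ case transparent, but rests on formal-group facts the paper deliberately avoids; the paper's approach is longer but uses only the explicit addition and tripling formulas from Silverman. One small caveat in your write-up: you assert the reduction is Kodaira type~II and hence $\cE(\Q_3)=\cE_0(\Q_3)$, but $v_3(\Delta)=5$ here so the type is not~II and the equality of $\cE$ and $\cE_0$ over $\Q_3$ is not automatic. Fortunately this does no harm, since you independently verify that $G=(4,4)$ reduces to the smooth point $(1,1)$, so $G\in\cE_0(\Q_3)$ and hence all $[n]G$ lie in $\cE_0(\Q_3)$ regardless of the component group.
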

\begin{proof}
We have $G=(4,4)$, $[2]G=(28,-148)$, and $[3]G=(73/9,595/27)$. 

\textbf{Claim 1:} assume that $P=[k]G$ for some $k\geq 1$
and $k\neq 3$. 
 If $\vert x(P)\vert_3=1$
then $\vert x(P+[3]G)\vert_3=1$.

Proof of Claim 1: write $P=(x_P,y_P)$. Since $\vert x_P\vert_3=1$
and $y_P^2=x_P^3-48$, we have $\vert y_P\vert_3=1$. Let
$$\lambda=\frac{y_P-\frac{595}{27}}{x_P-\frac{73}{9}},\ 
\nu=\frac{\frac{595}{27}x_P-\frac{73}{9}y_P}{x_P-\frac{73}{9}}.$$ 
From \cite[pp.~54]{Sil09_TA}, the $x$-coordinate of $P+[3]G$ is:
$$\lambda^2-\frac{73}{9}-x_P=\frac{-x_P^3+\frac{73}{9}x_P^2+\frac{5329}{81}x_P+y_P^2-\frac{1190}{27}y_P-48}{(x_P-\frac{73}{9})^2}.$$
This proves Claim 1 since
$$\left\vert -x_P^3+\frac{73}{9}x_P^2+\frac{5329}{81}x_P+y_P^2-\frac{1190}{27}y_P-48\right\vert_3 =\left\vert (x_P-\frac{73}{9})^2\right\vert_3=81.
$$

By induction, Claim 1 shows that $\vert x([n]G)\vert_3=1$ if $3\nmid n$. By induction again, it remains to prove the
following.

\textbf{Claim 2:} assume that $P=[k]G$ with $k\geq 1$. If
$\vert x(P)\vert_3\geq 1$ then
$\vert x([3]P)\vert_3= 9\vert x(P)\vert_3$.

Proof of Claim 2: write $P=(x_P,y_P)$. From \cite[pp.~105--106]{Sil09_TA}, consider:
$$\psi_3=3x^4-576x=3x(x^3-192)$$
$$\psi_2=2y,\ \psi_4=2y(2x^6-1920x^3-192^2),$$
$$\psi_2\psi_4=4y^2(2x^6-1920x^3-192^2)=4(x^3-48)(2x^6-1920x^3-192^2),$$
$$\phi_3=x\psi_3^2-\psi_2\psi_4=x^9+4608x^6+110592x^3-7077888,$$
$$f(x)=\frac{\phi_3}{\psi_3^2}=\frac{x^9+4608x^6+110592x^3-7077888}{9x^2(x^3-192)^2}$$
so that $x([3]P)=f(x_P)$. This proves Claim 2
since
$$\vert x_P^9+4608x_P^6+110592x_P^3-7077888\vert_3=\vert x_P^9\vert_3\ 
\text{and}\ 
\vert 9x_P^2(x_P^3-192)^2\vert_3=\frac{1}{9}\vert x_P^8\vert_3.$$
\end{proof}

Let $h$ denote the absolute logarithmic Weil height on
$\bP^1(\Qbar)$ and let $\hhat$ denote the N\'eron-Tate canonical
height on $\cE(\Qbar)$, see \cite[Chapter~8]{Sil09_TA}. We have $\Delta=-3^5\times 2^{12}$ 
and $j=0$. Then a result of Silverman \cite[pp.~726]{Sil90_TD} gives:
\begin{equation}\label{eq:Silverman}
-2.13<\hhat(P)-\frac{1}{2}h(x(P))<2.222
\end{equation}

We calculate the point $[25] G$ explicitly, then apply \eqref{eq:Silverman} for this point, and use
$\hhat([25] G)=625\hhat(G)$ to obtain:
\begin{equation}\label{eq:height G}
0.25<\hhat(G).
\end{equation}

From \eqref{eq:Silverman} and \eqref{eq:height G}, we have:
\begin{equation}\label{eq:lower bound 1}
h(\tilde{x})>2\hhat([N]G)-4.444>0.5 N^2-4.444.
\end{equation}
From \eqref{eq:tilde x and tilde y} and \eqref{eq:lower bound 1}, we have:
\begin{equation}\label{eq:upper bound 3}
\frac{1}{3^{b+1}\alpha}\max\{\vert 12z_3\vert,\vert z_1+z_2\vert\}
=\max\{\vert 4\beta\vert,\vert 3^{2b}\alpha^2\vert\}
\geq e^{h(\tilde{x})}>e^{0.5N^2-4.444}.
\end{equation}

From \eqref{eq:z1 divides 3^3b+2-1} and \eqref{eq:upper bound 2},
we have:
\begin{equation}\label{eq:upper bound 4}
\max\{\vert z_1+z_2\vert, \vert 12z_3\vert\}<3^{37.5b+31.5}.
\end{equation}

Equations \eqref{eq:upper bound 3} and \eqref{eq:upper bound 4}
give:
\begin{equation}\label{eq:N^2}
0.5N^2-4.444<(36.5b+30.5)\ln(3).
\end{equation}

Proposition~\ref{prop:bound N} together with
$\vert \tilde{x}\vert_3=3^{2b}$
imply $3^b\mid N$. Together
with \eqref{eq:upper bound 2}, we have:
\begin{equation}\label{eq:final}
3^{2b}\leq N^2<81b+76
\end{equation}

Hence $b<3$. We check the following cases:
\begin{itemize}
\item [(i)] $b=0$. So $z_1\mid 8$ and $N^2<76$ which gives $N\in\{1,\ldots,8\}$.

\item [(ii)] $b=1$. So $z_1\mid 242$, $3\mid N$ and $N^2<157$
which give $N\in\{3,6,9,12\}$.

\item [(iii)] $b=2$. So $z_1\mid 6560$, $9\mid N$
and $N^2<238$ which give $N=9$.
\end{itemize}

We consider all the values of $N$ and $b$ below. Since we may
replace $(z_1,z_2,z_3)$ by $(-z_1,-z_2,-z_3)$, we always choose
$\alpha>0$.	The pair $(\alpha,\beta)$ is determined
using $x([N]G)=\displaystyle\frac{4\beta}{3^{2b}\alpha^2}$, $3\nmid \alpha\beta$, and
$\gcd(\alpha,\beta)=1$. 

The case $N=1$ and $b=0$
gives $x(G)=4=\displaystyle\frac{4\beta}{\alpha^2}$,
hence $\alpha=\beta=1$, $\tilde{z}_1+\tilde{z}_2=3$, $\tilde{z}_1^2-\tilde{z}_1\tilde{z}_2+\tilde{z}_2^2=3$, 
$\tilde{z}_1\mid 8$. Overall, we have the solution $(0,0,[2:1:1])$. 

For other values of $(N,b)$, from \eqref{eq:z1 divides 3^3b+2-1} and \eqref{eq:alpha and beta}, we have:
\begin{equation}
\vert \tilde{z}_1\vert < 3^{2b+2}\ \text{and}\ 
\vert \tilde{z}_2\vert < 3^{3b+1}\vert \alpha^3\vert +  3^{2b+2}.
\end{equation}
Then using $\tilde{z}_1\tilde{z}_2=\displaystyle\frac{1}{3}\left ((\tilde{z}_1+\tilde{z}_2)^2-(\tilde{z}_1-\tilde{z}_1\tilde{z}_2+\tilde{z}_2^2)\right)=3^{6b+1}\alpha^6-\beta^3$, we have:
\begin{equation}\label{eq:the last one}
3^{2b+2}(3^{3b+1}\vert \alpha^3\vert + 3^{2b+2})> \vert 3^{6b+1}\alpha^6-\beta^3\vert.
\end{equation}
We can readily check that \eqref{eq:the last one} fails
for the data in the following table and this finishes the proof.

\begin{center}
\begin{tabular}{|c|c|c|c|}
\hline
$(N,b)$ & $x([N]G)$ & $\alpha$ & $\beta$\\  \hline
& & & \\
$(2,0)$ & $28$& $1$ & $7$ \\ 
& & & \\
$(3,0)$ & $\displaystyle\frac{73}{9}$ & $6$ & $73$\\ 
& & & \\
$(3,1)$ & $\displaystyle\frac{73}{9}$ & $2$ & $73$\\ 
& & & \\
$(4,0)$ & $\displaystyle\frac{9772}{1369}$ & $37$ & $2443$\\ 
& & & \\
$(5,0)$ & $\displaystyle\frac{1184884}{32041}$ & $179$ & $296221$\\ 
& & & \\
$(6,0)$ & $\displaystyle\frac{48833569}{12744900}$ & $7140$ & $48833569$\\ 
& & & \\
$(6,1)$ & $\displaystyle\frac{48833569}{12744900}$ & $2380$ & $48833569$\\ 
& & & \\
$(7,0)$ & $\displaystyle\frac{238335887764}{143736121}$ & $11989$ & $59583971941$\\ 
& & & \\
$(8,0)$ & $\displaystyle\frac{292913655316492}{69305008951369}$ & $8324963$ & $73228413829123$\\ 
& & & \\
$(9,1)$ & $\displaystyle\frac{587359987541570953}{26773203784287249}$ & $109083462$ & $587359\ldots$\\ 
& & & \\
$(9,2)$ & $\displaystyle\frac{587359987541570953}{26773203784287249}$ & $36361154$ &  $587359\ldots$\\ 
& & & \\
$(12,1)$ & 
$\displaystyle\frac{44507186275594022064781897173121}{871004453785806995703095216400}$ & $622184\ldots$ &  $445071\ldots$\\ 
& & & \\ \hline
\end{tabular}
\end{center}

	\bibliographystyle{amsalpha}
	\bibliography{Hermite_Joubert_4} 	

\end{document}